\documentclass[12pt,reqno]{amsart}

\usepackage[a4paper,bindingoffset=0.1in,left=1.2in,right=1.3in,top=1.4in,bottom=1.5in,footskip=.25in]{geometry}

\usepackage{indentfirst,amssymb,amsmath,amsthm}  
\usepackage{amscd, amsfonts, graphicx,tikz, color, environ}
\usepackage{newtxtext,newtxmath}
\usepackage{setspace}
\doublespacing
\usepackage{times}

\usepackage{hyperref}
\hypersetup{colorlinks=true, linkcolor=blue,citecolor=blue, urlcolor=blue}
\urlstyle{same}

\usepackage{mathtools}
\DeclarePairedDelimiter\floor{\lfloor}{\rfloor}

\DeclareMathOperator{\ann}{ann}
\DeclareMathOperator{\ord}{ord}
\DeclareMathOperator{\dime}{dim}
\DeclareMathOperator{\pideg}{PI-deg}

\DeclareMathOperator{\gcdi}{gcd}
\DeclareMathOperator{\spect}{spec}

\numberwithin{equation}{section}

\newtheorem{theo}{Theorem}[section]

\newtheorem{rema}{Remark}[section]
\newtheorem{coro}{Corollary}[section]
\newtheorem{prop}{Proposition}[section]
\newtheorem{Notation}{Notation}[section]
\newtheorem*{theorem A}{Theorem A}
\newtheorem*{theorem B}{Theorem B}

\makeatletter
\g@addto@macro{\endabstract}{\@setabstract}
\newcommand{\authorfootnotes}{\renewcommand\thefootnote{\@fnsymbol\c@footnote}}%
\makeatother

\begin{document}

\label{'ubf'}  
\setcounter{page}{1} 
\baselineskip .65cm 
\pagenumbering{arabic}

\markboth 
{\hspace*{-9mm} \centerline{\footnotesize
         Multiparameter Quantum Affine Space  }
                 }
{ \centerline                           {\footnotesize \ 
                 Snehashis Mukherjee and Sanu Bera } 
\hspace*{-9mm}}

\begin{center}
  \Large
{\textbf{ Construction of Simple Modules over the Quantum Affine Space}}
\par \vspace{.5cm}

  \normalsize
  \authorfootnotes
  {\bf Snehashis Mukherjee}\footnote{tutunsnehashis@gmail.com}
   \hspace{.6cm}
  {\bf Sanu Bera}\footnote{sanubera6575@gmail.com}
 \par
  School of Mathematical Sciences, \par Ramakrishna Mission Vivekananda Educational and Research Institute, 
  \par Belur Math, Howrah, West Bengal, Box: 711202, India.
\end{center}

\hspace{.8cm}

\noindent{\small{\bf Abstract:}
The coordinate ring $\mathcal{O}_{\mathbf{q}}(\mathbb{K}^n)$ of quantum affine space is the $\mathbb{K}$-algebra presented by generators $x_1,\cdots ,x_n$ and relations $x_ix_j=q_{ij}x_jx_i$ for all $i,j$.  We construct simple $\mathcal{O}_{\mathbf{q}}(\mathbb{K}^n)$-modules in a more general setting where the entries $q_{ij}$ lie in a torsion subgroup of $\mathbb{K}^*$ and show analogous results hold as in \cite{mo}.
\medskip\\
{\small {\bf 2010 Mathematics Subject Classification: } 16D60, 16S85}\\
{\small \textbf{Keywords:}  Quantum affine space, Quantum torus, Simple modules.}}

\section{\textbf{Introduction}}
Quantum affine spaces, the well-known deformations of commutative polynomials rings, are the most elementary noncommutative algebras occurring among quantized coordinate rings. Let $\mathbb{K}$ be a field and $\mathbb{K}^*$ denote the group $\mathbb{K}\setminus\{0\}$. A square matrix $\mathbf{q}\in \mathcal{M}_n(\mathbb{K}^{*})$ is multiplicatively antisymmetric if $q_{ii}=1$ for all $i$ and $q_{ji}={q_{ij}}^{-1}$ for all $i\neq j$. Given such a matrix, the corresponding multiparameter coordinate ring of quantum affine $n$-space, or just multiparameter quantum affine $n$-space, is the $\mathbb{K}$-algebra $\mathcal{O}_{\mathbf{q}}(\mathbb{K}^n)$ generated by the variables $x_1,\cdots ,x_n$ subject only to the relations
\begin{equation} \label{relation}
x_ix_j=q_{ij}x_jx_i, \ \ \ \forall\ \ \ 1 \leq i,j\leq n.
\end{equation}
First of all, the multiparameter quantum affine spaces $\mathcal{O}_{\mathbf{q}}(\mathbb{K}^n)$ are iterated skew polynomial rings, in which each iteration is twisted only by an automorphism. Hence $\mathcal{O}_{\mathbf{q}}(\mathbb{K}^n)$ is an affine noetherian domain (cf. \cite{mr}, 2.9) and has a $\mathbb{K}$ basis consisting of $\{x_1^{r_1}\cdots x_n^{r_n}~|~r_i\geq 0\}$. Also it is clear that $\mathcal{O}_{\mathbf{q}}(\mathbb{K}^n)$ is a constructible $\mathbb{K}$-algebra and so it satisfies the Nullstellensatz over $\mathbb{K}$; in particular, $\mathcal{O}_{\mathbf{q}}(\mathbb{K}^n)$ is a Jacobson ring (cf. \cite{mr}, 9.4.21). Furthermore, standard methods show that $\mathcal{O}_{\mathbf{q}}(\mathbb{K}^n)$ is Auslander regular and Cohen-Macaulay, with global and Gelfand-Kirillov dimension $n$ (cf. \cite{gl}, 2.5). Also, the previous results can be used to show that $\spect \mathcal{O}_{\mathbf{q}}(\mathbb{K}^n)$ is catenary (\cite{gl}, 2.6). The prime and primitive spectra of the algebra $\mathcal{O}_{\mathbf{q}}(\mathbb{K}^n)$ were studied  for arbitrary parameter matrices $\mathbf{q}$ in \cite{bg1},\cite{ckp},\cite{glt}. Also the isomorphism problem for the quantum affine spaces was solved in \cite{jg}.
\par The coordinate ring $\mathcal{O}_{\mathbf{q}}(\mathbb{K}^n)$ plays a fundamental role in non-commutative geometry (see \cite{bvf}). Suitable localizations of the quantum affine space arise in the representation theory of torsion-free
nilpotent groups (see \cite{cb}). Moreover, Such algebras form an essential part of the analysis of quantum groups at roots of unity (see \cite{cp}).
\par In  \cite{mo}, Kangju Min and Sei-Qwon Oh investigated the simple $\mathcal{O}_{\mathbf{q}}(\mathbb{C}^n)$-modules in the uniparameter case 
\[
q_{ij}=q,\ \ \ \forall \ \ \ 1\leq i<j \leq n,
\]
where $q$ is a primitive $m$-th root of unity and gave an explicit construction of the simple modules. In particular, they established the following fact: 
\begin{theo}[\cite{mo}, Theorem $5$] \label{th1}
There is a surjective map $\Phi$ from $\mathbb{C}^n$ onto the set of all simple $\mathcal{O}_{\mathbf{q}}(\mathbb{C}^n)$-modules in the case when $q$ is a primitive $m$-th root of unity such that 
\[\dime_\mathbb{C}\Phi\left(\underline{\alpha}\right) =m^{\floor*{\frac{p}{2}}},\]
where $p$ is the number of non zero $\alpha_i$ in $\underline{\alpha}=(\alpha_1,\cdots,\alpha_n)\in \mathbb{C}^n$ and $\floor*{x}$ is the integer part of $x$. 
\end{theo}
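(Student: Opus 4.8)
The plan is to invert a suitable subset of the variables so as to pass to a quantum torus, to classify the simple modules of that torus through its center, and then to run the correspondence in both directions to obtain $\Phi$ and its surjectivity; throughout I use that $\mathbb{K}=\mathbb{C}$ is algebraically closed and that $\mathcal{O}_{\mathbf{q}}(\mathbb{C}^n)$ satisfies the Nullstellensatz. First I would record two structural facts: since $q^m=1$, relation \eqref{relation} gives $x_i^m x_j=x_j x_i^m$, so each $x_i^m$ is central, and $x_i\,\mathcal{O}_{\mathbf{q}}(\mathbb{C}^n)=\mathcal{O}_{\mathbf{q}}(\mathbb{C}^n)\,x_i$, so each $x_i$ is normal. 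For a simple module $M$: by the Nullstellensatz $\enmor(M)=\mathbb{C}$, hence the central elements $x_i^m$ act as scalars, and then the PBW basis together with $x_i^m v=(\text{scalar})v$ shows that $M$ is spanned by $\{x_1^{a_1}\cdots x_n^{a_n}v:0\le a_i<m\}$ for any $0\ne v\in M$, so $\dime_{\mathbb{C}}M\le m^n$. Normality of $x_i$ makes $x_iM$ and $\ker(x_i\colon M\to M)$ submodules, so by simplicity each $x_i$ acts on $M$ either as $0$ (and then $x_i^m$ acts as $0$) or invertibly (and then $x_i^m$ acts as a nonzero scalar). Put $S=\{i:x_i|_M\text{ is invertible}\}$ and $p=|S|$; the remaining $x_j$ annihilate $M$, so $M$ is a module over $\mathcal{O}_{\mathbf{q}}(\mathbb{C}^n)/(x_j:j\notin S)\cong\mathcal{O}_{\mathbf{q}_S}(\mathbb{C}^p)$, the uniparameter quantum affine $p$-space on $\{x_i:i\in S\}$, and, the surviving $x_i$ being invertible, $M$ is a simple module over its localization $\mathcal{T}_p$, the corresponding quantum torus.

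Next I would analyse $\mathcal{T}_p$ through its defining skew-symmetric integer matrix $\Lambda_p$ (the matrix with $1$ above the diagonal). An integral change of the monomial variables brings $\Lambda_p$ to the skew tridiagonal matrix $T_p$ with $1$'s on the superdiagonal (using the basis $e_i-e_{i+1}$, $i<p$, together with $e_p$); since $\mathrm{Pf}(T_p)=1$ when $p$ is even and $T_p$ has an explicit primitive one-dimensional kernel when $p$ is odd (with $\det T_{p-1}=1$ as a witness), all elementary divisors of $\Lambda_p$ equal $1$, exactly $2\floor*{p/2}$ of them being nonzero. Recall that a quantum torus at a root of unity is free over its center and Azumaya; the rank over the center is $m^{2\floor*{p/2}}$, so $\mathcal{T}_p$ has PI-degree $m^{\floor*{p/2}}$. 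Being affine PI and Azumaya over its affine (over $\mathbb{C}$) center $Z(\mathcal{T}_p)$, the torus $\mathcal{T}_p$ has all simple modules of dimension exactly $m^{\floor*{p/2}}$, and these are in bijection with $\mathrm{Max}(Z(\mathcal{T}_p))$. Moreover $Z(\mathcal{T}_p)$ is a Laurent polynomial ring in $p$ variables, and one checks that it admits $\{x_i^m:i\in S\}$ as a free generating set when $p$ is even, and, writing $S=\{i_1<\cdots<i_p\}$, the set $\{x_{i_1}^m,\dots,x_{i_{p-1}}^m\}\cup\{w\}$ with $w=x_{i_1}x_{i_2}^{-1}x_{i_3}x_{i_4}^{-1}\cdots x_{i_{p-1}}^{-1}x_{i_p}$ (the central monomial attached to the radical of $\Lambda_p$) when $p$ is odd — here one uses that the $i_p$-coordinate of the radical generator is $\pm1$, which lets $w$ be traded in as a free coordinate.

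Finally I would build $\Phi$. Given $\underline{\alpha}\in\mathbb{C}^n$ with support $S=\{i_1<\cdots<i_p\}$, let $\Phi(\underline{\alpha})$ be the unique simple $\mathcal{T}_p$-module whose central character sends the chosen free generators of $Z(\mathcal{T}_p)$ to the (nonzero) scalars $\alpha_{i_1},\dots,\alpha_{i_p}$, the last one being assigned to $w$ when $p$ is odd, regarded as an $\mathcal{O}_{\mathbf{q}}(\mathbb{C}^n)$-module by letting each $x_j$ with $j\notin S$ act as $0$; for $p=0$ this is the trivial module. This is well defined (the relations \eqref{relation} involving an index outside $S$ reduce to $0=0$) and simple, because an $\mathcal{O}_{\mathbf{q}}(\mathbb{C}^n)$-submodule is automatically a $\mathcal{T}_p$-submodule (the $x_i$, $i\in S$, acting invertibly), and $\dime_{\mathbb{C}}\Phi(\underline{\alpha})=m^{\floor*{p/2}}$ with $p$ the number of nonzero $\alpha_i$, which is the asserted formula. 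Surjectivity is then immediate: a simple $M$ was realised above as a simple $\mathcal{T}_p$-module extended by zero; its central character assigns arbitrary nonzero scalars to the free generators of $Z(\mathcal{T}_p)$, and choosing $\underline{\alpha}$ supported on $S$ with precisely those scalars as its entries yields $M\cong\Phi(\underline{\alpha})$.

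I expect the main obstacle to be the structural input on $\mathcal{T}_p$: verifying that $\Lambda_p$ has all elementary divisors equal to $1$ — so that the PI-degree is exactly $m^{\floor*{p/2}}$ and not smaller — via the explicit tridiagonal/symplectic normalization, together with the Azumaya property and the identification of a convenient free generating set of the center, in particular the bookkeeping (when $p$ is odd) that lets the central monomial $w$ absorb one coordinate of $\underline{\alpha}$. The remaining ingredients — normality of the $x_i$, finiteness of $\dime_{\mathbb{C}}M$, extension by zero, and matching of central characters — are routine.
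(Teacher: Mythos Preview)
Your argument is correct. Note first that Theorem~\ref{th1} is quoted from \cite{mo} and is not proved in this paper; the paper instead proves the multiparameter analogues (Theorem~A, Theorem~B, Corollary~\ref{surjective}), which specialise to Theorem~\ref{th1} once one combines them with the reduction in the Introduction (passing to the sub-quantum-affine space on the variables that act nontrivially). So the relevant comparison is between your approach and that specialisation.

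Both approaches share the same opening: use normality of the $x_i$ to see that on a simple module each $x_i$ acts either as zero or invertibly, kill the zero ones, and pass to the rank--$p$ quantum torus $\mathcal{T}_p$. From there the routes diverge. The paper (following \cite{mo}) invokes Neeb's normal form to decompose $\mathcal{T}_p$ as a tensor product of rank--$2$ tori and a Laurent ring, then builds explicit modules $M(\underline{\alpha})$ with a concrete basis $e(a_1,\dots,a_s)$, proves simplicity by an eigenvector--shortening argument for $X_iX_{i+s}$, and establishes surjectivity by writing down an explicit homomorphism $\phi$ and appealing to Schur. You instead compute the integral skew normal form of the uniparameter exponent matrix $\Lambda_p$ directly (your unimodular change $f_i=e_i-e_{i+1}$, $f_p=e_p$ to the tridiagonal $T_p$ is exactly right, and the recursion $\det T_p=\det T_{p-2}$ gives the unit Pfaffian), read off $\pideg(\mathcal{T}_p)=m^{\lfloor p/2\rfloor}$, and then use the Azumaya property to identify simple $\mathcal{T}_p$-modules with points of $\mathrm{Max}\,Z(\mathcal{T}_p)$, bypassing any explicit module construction. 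Your identification of a free generating set for $Z(\mathcal{T}_p)$ --- the $x_i^m$ together with the alternating monomial $w$ when $p$ is odd --- is correct: the kernel of $\Lambda_p$ over $\mathbb{Z}$ is spanned by $(1,-1,1,\dots,\pm 1)$, and the index count $m^{p-1}$ matches.

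What each approach buys: the paper's explicit $M(\underline{\alpha})$ gives concrete representation matrices and makes the isomorphism criterion (Theorem~\ref{inj}) transparent; it also generalises verbatim to the multiparameter setting via Neeb. Your approach is shorter and more structural --- Azumaya plus the centre does all the work --- but the elementary--divisor computation is tailored to the uniparameter matrix and would have to be replaced by something like Neeb's theorem in the general case. Both are valid proofs of Theorem~\ref{th1}.
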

~\par  However, to date no full classification of irreducible representations of the algebra $\mathcal{O}_{\mathbf{q}}(\mathbb{K}^n)$ has appeared in the literature. In this article, we wish to generalize the above result for multiparameter case assuming that  $\mathbb{K}$ is an algebraically closed field and the multiplicative group $\mathbf{\Lambda}$ generated by the multiparameters $q_{ij}$ is a torsion (and hence cyclic) subgroup of $\mathbb{K}^*$. Let $m$ be the order and $q$ be a generator of the group $\mathbf{\Lambda}$. Then the multiparameters are of the form 
\begin{equation}\label{gen}
    q_{ij}=q^{h_{ij}},\ \ h_{ij} \in {\mathbb{Z}}, \ \ \ \forall \ \ \ 1\leq i,j \leq n.
\end{equation}
If $m=p^{l_1}_1\cdots p^{l_k}_k$ be the prime factorization of $m$, then the set 
\begin{equation}\label{mc}
    \mathbf{P}:=\{~\overline{p}^{j_1}_1\cdots \overline{p}^{j_k}_k~|~0\leq j_i\leq l_i,~ i=1,\cdots,k~\}\subseteq {\mathbb{Z}}/{m\mathbb{Z}}.
\end{equation}
is a multiplicatively closed set of representatives for the multiplicative cosets of the unit group  $({\mathbb{Z}}/{m\mathbb{Z}})^*$~ (see  \cite{nb}, Section III).
\par Throughout this paper a module means a right module and $\mathbb{K}$ denotes an algebraically closed field. Given a simple module $N$ over $\mathcal{O}_{\mathbf{q}}(\mathbb{K}^n)$ we may assume that the action of the variables $x_1,\cdots,x_n$ on $N$ is non trivial. Otherwise, if the action of a variable, say $x_j$ on $N$ is trivial, then $x_j\in \ann(N)$ and $N$ becomes a simple $\mathcal{O}_{\mathbf{q}'}(\mathbb{K}^{n-1})$-module,  where $\mathbf{q}'$ is the $(n-1)\times (n-1)$ multiplicatively antisymmetric matrix obtained form $\mathbf{q}$ by removing $j$-th row and $j$-th column simultaneously.
\medskip
~\par Let $\mathbb{A}_{\mathbf{q}}$ be the localization of $\mathcal{O}_{\mathbf{q}}(\mathbb{K}^n)$ with respect to the Ore set $\mathbf{X}$ generated by $x_1,\cdots,x_n$. Then $\mathbb{A}_{\mathbf{q}}$ is the $\mathbb{K}$-algebra $\mathcal{O}_{\mathbf{q}}\left(\mathbb{(K^*)}^n\right)$ generated by the variables $x_1,\cdots,x_n$ together with their inverses which satisfy the relations (\ref{relation}). This resulting algebra is of type investigated by J. C. McConnell and J. J. Pettit (cf. \cite{mp}). Also the prime and primitive spectra of this algebra were analyzed in \cite{glt}. In the literature, the ring $\mathbb{A}_{\mathbf{q}}$ is called a quantum torus of rank $n$. In particular, when $n=2$ we write $\mathbb{A}_{q}$ instead of $\mathbb{A}_{\mathbf{q}}$ where $xy=qyx$.
\medskip
\par In \cite{nb}, Karl-Hermann Neeb investigated the normal form of rational quantum tori, that is, assuming that the group $\mathbf{\Lambda}$ is torsion. In particular, the following result was shown:
\begin{theo}[\cite{nb}, Theorem III.4]\label{neeb}
For any rational quantum torus $\mathbb{A}_{\mathbf{q}}$ of rank $n$ over $\mathbb{K}$, there exists $s\in\mathbb{N}_0$ with $2s\leq n$ and $h_2\mid h_3\mid\cdots\mid h_s$ in $\mathbf{P}\setminus\{0\}$ such that
\begin{equation}
    \mathbb{A}_{\mathbf{q}} \cong  \mathbb{A}_q \otimes \mathbb{A}_{q^{h_2}} \otimes \mathbb{A}_{q^{h_3}}\otimes \cdots \otimes \mathbb{A}_{q^{h_s}}\otimes \mathbb{K}\left[\mathbb{Z}^{n-2s}\right],\ \ \  2s<n
\end{equation}
\begin{center}
   \text{or}
\end{center}
\begin{equation}\label{nn2}
\mathbb{A}_{\mathbf{q}} \cong  \mathbb{A}_q \otimes \mathbb{A}_{q^{h_2}} \otimes \mathbb{A}_{q^{h_3}}\otimes \cdots \otimes \mathbb{A}_{q^{h_{s-1}}}\otimes \mathbb{A}_{q^{zh_{s}}}~,\ \ \ \ \  2s = n
\end{equation}
for some $z \in \mathbb{N}$ with $\ord \left(q^{zh_{s}}\right) = \ord \left(q^{h_{s}}\right)$ and for quantum tori $\mathbb{A}_{q^{h_i}}$ (with $h_1=1$) of rank $2$.
\end{theo}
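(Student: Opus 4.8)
The plan is to strip $\mathbb{A}_{\mathbf{q}}$ down to the alternating bicharacter on $\mathbb{Z}^{n}$ encoding its commutation relations, and to read off the asserted tensor decomposition from a normal form for that bicharacter under monomial changes of variables. Writing $x^{\mathbf{r}}=x_{1}^{r_{1}}\cdots x_{n}^{r_{n}}$ for $\mathbf{r}\in\mathbb{Z}^{n}$, the relations \eqref{relation} give $x^{\mathbf{r}}x^{\mathbf{s}}=q^{\,H(\mathbf{r},\mathbf{s})}x^{\mathbf{s}}x^{\mathbf{r}}$, where $H(\mathbf{r},\mathbf{s})=\sum_{i,j}h_{ij}r_{i}s_{j}$ is an alternating bilinear form on $\mathbb{Z}^{n}$ with values in $\mathbb{Z}/m\mathbb{Z}$ (recall $m=|\mathbf{\Lambda}|=\ord(q)$). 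First I would record the routine fact that, for $g\in GL_{n}(\mathbb{Z})$ with columns $\mathbf{g}_{1},\dots,\mathbf{g}_{n}$, the monomials $y_{j}:=x^{\mathbf{g}_{j}}$ again generate $\mathbb{A}_{\mathbf{q}}$ and obey the relations of a quantum torus with form $g^{\mathrm{t}}Hg$; and, conversely, that a block decomposition of $H$ into $2\times2$ blocks $\left(\begin{smallmatrix}0&d_{j}\\-d_{j}&0\end{smallmatrix}\right)$, $1\le j\le s$, together with a zero block of size $n-2s$ translates verbatim into an isomorphism $\mathbb{A}_{\mathbf{q}}\cong\mathbb{A}_{q^{d_{1}}}\otimes\cdots\otimes\mathbb{A}_{q^{d_{s}}}\otimes\mathbb{K}\left[\mathbb{Z}^{n-2s}\right]$. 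Thus everything reduces to bringing an alternating matrix over $\mathbb{Z}/m\mathbb{Z}$ into such a shape with the $d_{j}$ nonzero, forming a divisibility chain, and lying in $\mathbf{P}$.

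For the normal form I would lift to $\mathbb{Z}$ and reduce. Pick any alternating integer matrix $\widetilde{H}$ representing $H$; since $\mathbb{Z}$ is a principal ideal domain, the symplectic (Smith-type) normal form for alternating matrices produces $g\in GL_{n}(\mathbb{Z})$ with $g^{\mathrm{t}}\widetilde{H}g=\bigoplus_{j=1}^{r}\left(\begin{smallmatrix}0&e_{j}\\-e_{j}&0\end{smallmatrix}\right)\oplus 0$ and $e_{1}\mid e_{2}\mid\cdots\mid e_{r}$, $e_{j}\ge1$ — proved by repeatedly splitting off a hyperbolic plane on which $\widetilde{H}$ attains its content, together with its $\widetilde{H}$-orthogonal complement, and inducting on the rank. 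Reducing modulo $m$ turns each block with $m\mid e_{j}$ into a zero block; letting $s$ count the survivors, $H$ becomes congruent to $\bigoplus_{j=1}^{s}\left(\begin{smallmatrix}0&\overline{e}_{j}\\-\overline{e}_{j}&0\end{smallmatrix}\right)\oplus 0$ with $\overline{e}_{1}\mid\cdots\mid\overline{e}_{s}$ nonzero in $\mathbb{Z}/m\mathbb{Z}$ and a zero block of size $n-2s$. The nonzero structure constants $\overline{e}_{1},\dots,\overline{e}_{s}$ generate the image of the bicharacter, which is $\mathbf{\Lambda}$ of order exactly $m$; as $\overline{e}_{1}$ divides each $\overline{e}_{j}$ we have $\langle q^{\overline{e}_{1}},\dots,q^{\overline{e}_{s}}\rangle=\langle q^{\overline{e}_{1}}\rangle$, of order $m/\gcd(\overline{e}_{1},m)$, whence $\gcd(\overline{e}_{1},m)=1$.

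It remains to normalise the radii. Each $\overline{e}_{j}$ generates the same cyclic subgroup of $\mathbb{Z}/m\mathbb{Z}$ as $h_{j}:=\gcd(\overline{e}_{j},m)$, which divides $m$ and so lies in $\mathbf{P}$, with $h_{1}\mid h_{2}\mid\cdots\mid h_{s}$ and $h_{1}=1$ (the only unit of $\mathbb{Z}/m\mathbb{Z}$ lying in $\mathbf{P}$ is $1$); rescaling the $j$-th hyperbolic plane by a suitable unit carries $\overline{e}_{j}$ to $h_{j}$. But a composite of such rescalings descends to a change of variables in $\mathbb{A}_{\mathbf{q}}$ only if it lies in the image of $GL_{n}(\mathbb{Z})\to GL_{n}(\mathbb{Z}/m\mathbb{Z})$, which — by surjectivity of $SL_{n}(\mathbb{Z})\to SL_{n}(\mathbb{Z}/m\mathbb{Z})$ plus a sign adjustment — is precisely the subgroup of determinant $\pm1$; equivalently the product of all the units used on the $s$ hyperbolic planes and on the $n-2s$ null coordinates must equal $\pm1$. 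When $2s<n$ a null coordinate absorbs this constraint, every $\overline{e}_{j}$ can be pushed to $h_{j}$, and one obtains $\mathbb{A}_{\mathbf{q}}\cong\mathbb{A}_{q}\otimes\mathbb{A}_{q^{h_{2}}}\otimes\cdots\otimes\mathbb{A}_{q^{h_{s}}}\otimes\mathbb{K}\left[\mathbb{Z}^{n-2s}\right]$. When $2s=n$ there is no such slack: the Pfaffian identity $\mathrm{Pf}(g^{\mathrm{t}}Hg)=\det(g)\,\mathrm{Pf}(H)=\pm\mathrm{Pf}(H)$ pins $d_{1}\cdots d_{s}$ modulo $m$ up to sign, so after normalising $d_{1}=1,\dots,d_{s-1}=h_{s-1}$ the last plane is forced to carry $d_{s}=zh_{s}$ for a unit $z$ dictated by that constraint; since $z$ is a unit $\gcd(zh_{s},m)=\gcd(h_{s},m)$, i.e.\ $\ord(q^{zh_{s}})=\ord(q^{h_{s}})$, which is \eqref{nn2}.

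No single step here is deep; the point that wants care is this last, determinant-theoretic bookkeeping — identifying the image of $GL_{n}(\mathbb{Z})$ in $GL_{n}(\mathbb{Z}/m\mathbb{Z})$ as the determinant-$\pm1$ subgroup, tracking precisely how a surplus null coordinate (when $2s<n$) liberates all the rescaling units while its absence (when $2s=n$) turns the Pfaffian invariant $\mathrm{Pf}(H)\bmod m$ into the residual unit $z$, and checking that the reduction modulo $m$ of the integral symplectic normal form genuinely yields a divisibility chain of radii that survives passage to the canonical representatives in $\mathbf{P}$. The twisted group algebra description, the symplectic normal form over the PID $\mathbb{Z}$, and the surjectivity of $SL_{n}(\mathbb{Z})$ onto its finite quotients are, by comparison, standard.
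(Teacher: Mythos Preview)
The paper does not give its own proof of this statement: Theorem~\ref{neeb} is quoted verbatim from \cite{nb} (Neeb, Theorem~III.4) and used as a black box, so there is nothing in the present paper to compare your argument against. Your sketch is in fact a faithful outline of how Neeb proves the result --- encode $\mathbb{A}_{\mathbf{q}}$ by the alternating $\mathbb{Z}/m\mathbb{Z}$-valued form $H$ on $\mathbb{Z}^{n}$, reduce to $GL_{n}(\mathbb{Z})$-congruence of $H$, obtain a symplectic normal form over $\mathbb{Z}$ and reduce mod $m$, then normalise the hyperbolic radii into $\mathbf{P}$ subject to the determinant-$\pm1$ constraint on the image of $GL_{n}(\mathbb{Z})$ in $GL_{n}(\mathbb{Z}/m\mathbb{Z})$ --- and you have correctly isolated the one genuinely delicate point, namely that when $2s=n$ the Pfaffian invariant (well defined up to sign under $\det=\pm1$ congruence) obstructs normalising the last radius, leaving the residual unit $z$.

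The argument is sound as written. Two small things to tighten if you flesh it out: (i) the claim that $\overline{e}_{j}$ and $h_{j}=\gcd(e_{j},m)$ are unit multiples in $\mathbb{Z}/m\mathbb{Z}$ deserves a one-line justification (two elements of $\mathbb{Z}/m\mathbb{Z}$ are associates iff they have the same gcd with $m$); and (ii) the surjectivity of $SL_{n}(\mathbb{Z})\to SL_{n}(\mathbb{Z}/m\mathbb{Z})$ should be cited or proved, since it is what makes the determinant-$\pm1$ bookkeeping legitimate.
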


\begin{rema}\label{n2}
Since we are only interested in constructing simple $\mathbb{A}_{\mathbf{q}}$-modules upto isomorphism, in view of Theorem (\ref{inj}) it suffices to assume $h_s=zh_s$ in (\ref{nn2}). This allows us to work with the same normal form, viz  
\begin{equation}\label{final}
\mathbb{A}_{\mathbf{q}} \cong  \mathbb{A}_q \otimes \mathbb{A}_{q^{h_2}} \otimes \mathbb{A}_{q^{h_3}}\otimes \cdots \otimes \mathbb{A}_{q^{h_s}}\otimes \mathbb{K}\left[\mathbb{Z}^{n-2s}\right]
\end{equation}
in both cases of Theorem (\ref{neeb}). This normal form will play a central role in our module constructions. 
\end{rema}
\begin{rema}\label{limit}
The explicit description of the set $\mathbf{P}$ in (\ref{mc}), it follows that every $h_i\in \mathbf{P}\setminus\{0\}$ in the normal form (\ref{final}) must divides $m$.
We let 
\begin{equation}\label{pid}
k_i:=\frac{m}{h_i},\ \ \ \forall \ \ \ 1\leq i\leq s.
\end{equation}
\end{rema}
\begin{Notation}\label{not}
We let $X^{\pm1}_i$ and $X^{\pm1}_{i+s}$ be the generators of $\mathbb{A}_{q^{h_i}}$ with $h_1=1$, which satisfy the relation 
\begin{equation}\label{decom}
X_iX_{i+s}={q^{h_i}}X_{i+s}X_i,
\end{equation}
for each $1\leq i\leq s$. Also let the variables $X_{2s+1},\cdots,X_n$ generate the group algebra $\mathbb{K}\left[\mathbb{Z}^{n-2s}\right]$.
\end{Notation}
The following theorem generalizes the construction in \cite{mo} of simple modules for the more general 
relations (\ref{gen}) among the multiparameters.
\begin{theorem A}[Construction of Simple Modules]
For $ \underline\alpha:=(\alpha_1,\cdots,\alpha_n) \in \left(\mathbb{K}^*\right)^n$, let $M(\underline{\alpha})$ be the $\mathbb{K}$-vector space with basis $e(a_1,\cdots,a_s)$, where $0\leq a_i \leq k_i-1,~1\leq i\leq s$. Then there is an $\mathbb{A}_{\mathbf{q}}$-module structure on $M(\underline{\alpha})$ define as follows:
\begin{align*}
&e\left(a_1,\cdots,a_s\right)X_i=\alpha_i~e\left(a_1,\cdots,a_i+_i 1,\cdots,a_s\right),\ \ \ \forall \ \ \ 1 \leq i \leq s\\
&e\left(a_1,\cdots,a_s\right)X_{i+s}=\alpha_i^{-1}\alpha_{i+s}\left(q^{h_i}\right)^{a_i-1}e\left(a_1,\cdots,a_i+_i(-1),\cdots,a_s\right),\ \forall \ \ 1 \leq i \leq s\\
&e\left(a_1,\cdots,a_s\right)X_{2s+j}=\alpha_{2s+j} ~e\left(a_1,\cdots,a_s\right),\ \ \ \forall \ \ \ 1\leq j \leq n-2s
\end{align*}
where $+_i$ is addition in the additive group ${\mathbb{Z}}/{k_i\mathbb{Z}}$, and $s$ and $h_i$'s are as in the Theorem (\ref{neeb}). Moreover, $M(\underline{\alpha})$ is a simple $\mathbb{A}_{\mathbf{q}}$-module of dimension $\prod\limits_{i=1}^s k_i$.
\end{theorem A}
The following result also holds.
\begin{theorem B}
Each simple $\mathcal{O}_{\mathbf{q}}(\mathbb{K}^n)$-module has the form $M(\underline{\alpha})$, for some $\underline{\alpha} \in {(\mathbb{K}^*)}^n$. Furthermore, there is a surjective map from $(\mathbb{K}^{*})^n$ onto the set of all simple $\mathcal{O}_{\mathbf{q}}(\mathbb{K}^n)$-modules.
\end{theorem B}
\section{\textbf{Preliminaries}}
In this section we recall some known facts concerning the prime Polynomial Identity algebra as well as the $\mathbb{K}$-algebra $\mathcal{O}_{\mathbf{q}}(\mathbb{K}^n)$ that we shall be needing in the development of our results.

In the root of unity case (due to the assumptions on multiparameters), we can easily check that the quantum affine space $\mathcal{O}_{\mathbf{q}}(\mathbb{K}^n)$ is  a finitely generated module over a central subalgebra $\mathbb{K}[x^m_1,\cdots,x^m_n]$, with basis $\{x^{i_1}_1\cdots x^{i_n}_n~|~0\leq i_1,\cdots,i_n\leq m-1\}$. Hence it follows that $\mathcal{O}_{\mathbf{q}}(\mathbb{K}^n)$ is a (prime) PI algebra (cf. [\cite{mr}, Corollary 13.1.13]). Similar argument shows that the quantum torus $\mathbb{A_{\mathbf{q}}}$ is also a (prime) PI algebra in root of unity case. This sufficient condition on the multiparameters $q_{ij}$ for a quantum affine space $\mathcal{O}_{\mathbf{q}}(\mathbb{K}^n)$ and its localization $\mathbb{A_{\mathbf{q}}}$ to be PI is, in fact, necessary (see \cite{bg}, Proposition I.14.2). The following results provide a link between module structure over $\mathcal{O}_{\mathbf{q}}(\mathbb{K}^n)$ and over $\mathbb{A_{\mathbf{q}}}$ respectively:
\begin{prop}[\cite{bg}, Proposition III.1.1.] \label{finite}
Every simple $\mathcal{O}_{\mathbf{q}}(\mathbb{K}^n)$-module (respectively, simple $\mathbb{A_{\mathbf{q}}}$-module) is finite dimensional vector space over $\mathbb{K}$.
\end{prop}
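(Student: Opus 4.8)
We treat the two assertions uniformly: write $A$ for either $\mathcal{O}_{\mathbf{q}}(\mathbb{K}^n)$ or $\mathbb{A}_{\mathbf{q}}$, and let $Z_0$ denote the central subalgebra $\mathbb{K}[x_1^m,\dots,x_n^m]$, respectively $\mathbb{K}[x_1^{\pm m},\dots,x_n^{\pm m}]$; as recalled just above, in the root of unity case $A$ is a finitely generated $Z_0$-module with the standard monomial basis $\{x_1^{i_1}\cdots x_n^{i_n}\mid 0\le i_j\le m-1\}$. Let $N$ be a simple right $A$-module. The plan is: (i) show that every element of $Z_0$ acts on $N$ as a scalar; (ii) deduce that $N$ is a module over a finite-dimensional quotient $A/\mathfrak{m}A$ for a suitable maximal ideal $\mathfrak{m}\trianglelefteq Z_0$; (iii) conclude that $\dim_{\mathbb{K}} N<\infty$.

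For step (i): by Schur's Lemma $D:=\enmor_A(N)$ is a division ring. Since $A$ is a constructible (hence affine) $\mathbb{K}$-algebra satisfying the Nullstellensatz over $\mathbb{K}$, as noted in the introduction, $D$ is algebraic over $\mathbb{K}$; because $\mathbb{K}$ is algebraically closed this forces $D=\mathbb{K}$. Consequently, for every central element $z\in Z(A)$ — in particular for every $z\in Z_0$ — right multiplication by $z$ is an $A$-module endomorphism of $N$, hence acts as a scalar $\chi(z)\in\mathbb{K}$. The resulting map $\chi\colon Z_0\to\mathbb{K}$ is a $\mathbb{K}$-algebra homomorphism, so its kernel is a maximal ideal $\mathfrak{m}\trianglelefteq Z_0$ with $Z_0/\mathfrak{m}=\mathbb{K}$, and $N\mathfrak{m}=0$.

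For steps (ii) and (iii): since $N\mathfrak{m}=0$, the module $N$ descends to $\overline{A}:=A/\mathfrak{m}A$. As $A$ is module-finite over $Z_0$, the quotient $\overline{A}$ is module-finite over $Z_0/\mathfrak{m}=\mathbb{K}$; in fact $\dim_{\mathbb{K}}\overline{A}\le m^n$, since the standard monomial basis spans $A$ over $Z_0$. A nonzero cyclic module over a finite-dimensional $\mathbb{K}$-algebra is finite-dimensional, so choosing any $0\neq v\in N$ gives $N=v\overline{A}$ with $\dim_{\mathbb{K}}N\le\dim_{\mathbb{K}}\overline{A}<\infty$, which proves the proposition.

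The only delicate point is step (i), the identification $\enmor_A(N)=\mathbb{K}$: it is precisely here that both the affine/constructible nature of $A$ (yielding the Nullstellensatz, hence that $\enmor_A(N)$ is algebraic over $\mathbb{K}$) and the hypothesis that $\mathbb{K}$ is algebraically closed are used. Once central elements are known to act by scalars, the reduction to a finite-dimensional quotient is routine. As an alternative one may invoke Kaplansky's theorem for affine PI algebras: $A/\ann(N)$ is a primitive PI ring, hence simple Artinian and finite over its center $Z$, which by the Artin--Tate lemma is an affine $\mathbb{K}$-algebra that is a field, hence equals $\mathbb{K}$ by Zariski's lemma; then $A/\ann(N)$, and therefore $N$, is finite-dimensional over $\mathbb{K}$.
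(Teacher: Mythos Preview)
Your argument is correct. Note, however, that the paper does not supply a proof of this proposition at all: it is stated with a bare citation to \cite{bg}, Proposition~III.1.1, relying on the paragraph immediately preceding it, which records that $\mathcal{O}_{\mathbf{q}}(\mathbb{K}^n)$ (and likewise $\mathbb{A}_{\mathbf{q}}$) is module-finite over the commutative central subalgebra $Z_0$. What you have written is precisely the standard argument behind that reference: central elements act by scalars via Schur and the Nullstellensatz over an algebraically closed field, so a simple module descends to the finite-dimensional quotient $A/\mathfrak{m}A$. Your alternative route through Kaplansky's theorem and Artin--Tate is also valid and is in the same spirit as the PI-theoretic framework the paper invokes elsewhere (Propositions~\ref{quan} and~\ref{sim}).

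One small remark: you appeal to the noncommutative Nullstellensatz for constructible algebras to get $\enmor_A(N)$ algebraic over $\mathbb{K}$, which the paper only states explicitly for $\mathcal{O}_{\mathbf{q}}(\mathbb{K}^n)$; for $\mathbb{A}_{\mathbf{q}}$ the same holds since it too is an affine (indeed constructible) $\mathbb{K}$-algebra, but strictly speaking you could bypass this by arguing directly on the commutative affine subalgebra $Z_0$: the image of $Z_0$ in $\enmor_A(N)$ is a commutative domain (as $N$ is simple) that is a quotient of an affine $\mathbb{K}$-algebra, and any field finitely generated as a $\mathbb{K}$-algebra equals $\mathbb{K}$ by the classical (commutative) Nullstellensatz. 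Either way the conclusion stands.
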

\begin{prop} \label{divisible}
Let $N$ be a simple $\mathcal{O}_{\mathbf{q}}(\mathbb{K}^n)$-module. Then there exists a unique simple $\mathbb{A}_{\mathbf{q}}$-module structure on $N$ and such a structure is compatible with the $\mathcal{O}_{\mathbf{q}}(\mathbb{K}^n)$-module structure. 
\end{prop}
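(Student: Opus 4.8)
The plan is to show that each generator $x_i$ acts invertibly on $N$, so that the $\mathcal{O}_{\mathbf{q}}(\mathbb{K}^n)$-action extends uniquely across the Ore localization $\mathbb{A}_{\mathbf{q}}$, and then to observe that this extension remains simple.

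First I would record that, by Proposition \ref{finite}, $N$ is finite dimensional over $\mathbb{K}$, and by the reduction explained just before that proposition we may assume each $x_i$ acts nontrivially on $N$. Consider the $\mathbb{K}$-linear endomorphism $\rho_i\colon N\to N$, $n\mapsto nx_i$. Using the defining relation $x_jx_i=q_{ji}x_ix_j$ one checks that $\ker\rho_i$ is an $\mathcal{O}_{\mathbf{q}}(\mathbb{K}^n)$-submodule of $N$: if $nx_i=0$ then for each generator $x_j$ we have $(nx_j)x_i=n(x_jx_i)=q_{ji}\,(nx_i)x_j=0$, and since $\ker\rho_i$ is also a $\mathbb{K}$-subspace it is stable under all of $\mathcal{O}_{\mathbf{q}}(\mathbb{K}^n)$. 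As $N$ is simple and $\rho_i\neq 0$, this forces $\ker\rho_i=0$; because $\dim_{\mathbb{K}}N<\infty$, $\rho_i$ is therefore bijective. Consequently every element of the Ore set $\mathbf{X}$ generated by $x_1,\dots,x_n$ acts bijectively on $N$.

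Next I would invoke the universal property of Ore localization: a module over $\mathbb{A}_{\mathbf{q}}=\mathcal{O}_{\mathbf{q}}(\mathbb{K}^n)\mathbf{X}^{-1}$ is the same datum as an $\mathcal{O}_{\mathbf{q}}(\mathbb{K}^n)$-module on which every element of $\mathbf{X}$ acts invertibly, the action of $x_i^{-1}$ being necessarily $\rho_i^{-1}$. Hence the given $\mathcal{O}_{\mathbf{q}}(\mathbb{K}^n)$-module structure on $N$ extends to an $\mathbb{A}_{\mathbf{q}}$-module structure, and this extension is the unique $\mathbb{A}_{\mathbf{q}}$-structure compatible with the original one, since the action of any element of $\mathbb{A}_{\mathbf{q}}$ is determined by those of the $x_i$ and their inverses. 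Finally, any $\mathbb{A}_{\mathbf{q}}$-submodule of $N$ is in particular an $\mathcal{O}_{\mathbf{q}}(\mathbb{K}^n)$-submodule, because $\mathcal{O}_{\mathbf{q}}(\mathbb{K}^n)\subseteq\mathbb{A}_{\mathbf{q}}$; as $N$ is $\mathcal{O}_{\mathbf{q}}(\mathbb{K}^n)$-simple the only candidates are $0$ and $N$, so the extended structure is a simple $\mathbb{A}_{\mathbf{q}}$-module.

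I do not expect a genuine obstacle here. The only points needing care are the verification that $\ker\rho_i$ is a submodule (immediate from the commutation relations) and the bookkeeping around the universal property of localization, both routine given that $\mathbf{X}$ is an Ore set as recalled in the text; the essential content is simply that simplicity plus finite-dimensionality upgrades nonzero actions of the $x_i$ to bijections.
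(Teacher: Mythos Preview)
Your proof is correct and follows essentially the same line as the paper: show that each $x_i$ (hence every element of the Ore set $\mathbf{X}$) acts bijectively on $N$, then pass to the localization. The only minor difference is in how surjectivity of $\rho_i$ is obtained: you deduce it from injectivity together with finite-dimensionality (via Proposition~\ref{finite}), whereas the paper argues directly that $Nx_i$ is a nonzero submodule and hence equals $N$ by simplicity, so that $N$ is $\mathbf{X}$-divisible without invoking finite-dimensionality, and then cites \cite[Proposition~10.11]{gw} for the extension across the Ore localization.
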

 \begin{proof}
Clearly $N$ is $\mathbf{X}$-torsion free, because we have assumed that the action of each variable on $N$ is non trivial. Since $N$ is simple, $Nx=N$ for all $x \in \mathbf{X}$. Thus $N$ is $\mathbf{X}$-divisible. Hence the assertion follows from  Proposition (10.11) in \cite{gw}.
\end{proof}

The following theorem provides one of the key techniques for calculating the PI degree of the algebras we are interested in.
\begin{prop}[\cite{cp}, Proposition 7.1]\label{quan}
Let $\mathbf{q}=\left(q_{ij}\right)$ be an  $n \times n$ multiplicatively antisymmetric matrix over $\mathbb{K}$.
 Suppose that $q_{ij}=q^{h_{ij}}$ for all $i,j$, where $q \in \mathbb{K}^*$ is a primitive $m$-th root of unity and the $h_{ij} \in \mathbb{Z}$. Let $h$ be the cardinality of the image of the homomorphism 
\[
    \mathbb{Z}^n \xrightarrow{(h_{ij})} \mathbb{Z}^n \xrightarrow{\pi} \left(\mathbb{Z}/m\mathbb{Z}\right)^n,
\]
where $\pi$ denotes the canonical epimorphism. Then \[\pideg(\mathcal{O}_{\mathbf{q}}(\mathbb{K}^n))=\pideg(\mathbb{A_{\mathbf{q}}})=\sqrt{h}.\]
\end{prop}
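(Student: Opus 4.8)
The plan is to transfer the computation first to the quantum torus $\mathbb{A}_{\mathbf{q}}$ and then to its quotient division ring, where the PI degree reduces to a lattice-index count. First I would observe that $\pideg(\mathcal{O}_{\mathbf{q}}(\mathbb{K}^n)) = \pideg(\mathbb{A}_{\mathbf{q}})$: since $\mathbb{A}_{\mathbf{q}}$ is the Ore localization $\mathbf{X}^{-1}\mathcal{O}_{\mathbf{q}}(\mathbb{K}^n)$, the two rings are prime Noetherian PI domains with a common division ring of fractions, and for such a ring the PI degree equals the square root of the dimension of that division ring over its centre (Posner's theorem). So it is enough to show $\pideg(\mathbb{A}_{\mathbf{q}}) = \sqrt{h}$.

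Next I would determine the centre $Z$ of $\mathbb{A}_{\mathbf{q}}$. For $\lambda=(\lambda_1,\dots,\lambda_n)\in\mathbb{Z}^n$ write $x^{\lambda}:=x_1^{\lambda_1}\cdots x_n^{\lambda_n}$; a direct computation gives $x_i\,x^{\lambda}=\bigl(\textstyle\prod_{j}q_{ij}^{\lambda_j}\bigr)\,x^{\lambda}x_i$, so $x^{\lambda}$ is central exactly when $\sum_{j}h_{ij}\lambda_j\equiv 0\pmod m$ for every $i$, i.e. when $\lambda$ lies in the kernel $\Lambda_0$ of the composite map $\mathbb{Z}^n\xrightarrow{(h_{ij})}\mathbb{Z}^n\xrightarrow{\pi}(\mathbb{Z}/m\mathbb{Z})^n$ of the statement. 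Comparing coefficients in the monomial $\mathbb{K}$-basis shows that an element is central iff each of its monomial terms is, whence $Z=\bigoplus_{\lambda\in\Lambda_0}\mathbb{K}\,x^{\lambda}$. Since $m\mathbb{Z}^n\subseteq\Lambda_0$, the index $[\mathbb{Z}^n:\Lambda_0]$ is finite and equals $h$ by the first isomorphism theorem; a choice of coset representatives $\lambda^{(1)},\dots,\lambda^{(h)}$ of $\Lambda_0$ in $\mathbb{Z}^n$ then exhibits $\mathbb{A}_{\mathbf{q}}$ as a free $Z$-module with basis $\{x^{\lambda^{(t)}}\}_{t=1}^{h}$, so $\mathbb{A}_{\mathbf{q}}$ is free of rank $h$ over $Z$.

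Finally I would localize at $T:=Z\setminus\{0\}$ and put $B:=T^{-1}\mathbb{A}_{\mathbf{q}}$. Then $B$ is a domain which is finite-dimensional, of dimension $h$, over the field $\cf(Z)$, hence a division ring; because $\mathbb{A}_{\mathbf{q}}$ is a domain, clearing denominators shows $Z(B)=\cf(Z)$, so $B$ is a central division $\cf(Z)$-algebra with $\dim_{\cf(Z)}B=h$. Being a division ring of fractions of the Ore domain $\mathbb{A}_{\mathbf{q}}$, $B$ is its quotient division ring, and hence $\pideg(\mathbb{A}_{\mathbf{q}})=\sqrt{\dim_{\cf(Z)}B}=\sqrt{h}$ — in particular $h$ is automatically a perfect square. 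Together with the first step this yields $\pideg(\mathcal{O}_{\mathbf{q}}(\mathbb{K}^n))=\pideg(\mathbb{A}_{\mathbf{q}})=\sqrt{h}$.

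The step I expect to be the main obstacle is verifying that $B$ attains its full expected dimension over its centre, i.e. $Z(B)=\cf(Z)$: were extra central elements to appear after inverting $T$, the PI degree would drop below $\sqrt{h}$ and $h$ need not even be a square. What rules this out is that $\mathbb{A}_{\mathbf{q}}$ is a domain — this is where the iterated-skew-polynomial structure of $\mathcal{O}_{\mathbf{q}}(\mathbb{K}^n)$ enters — so $\mathbb{A}_{\mathbf{q}}\hookrightarrow B$ and an element centralizing $B$ must already commute with all of $\mathbb{A}_{\mathbf{q}}$, hence lie in $Z$.
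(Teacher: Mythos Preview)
The paper does not give its own proof of this proposition: it is quoted as \cite[Proposition~7.1]{cp} and used as a black box, so there is no argument in the paper to compare yours against. Your outline is the standard route to this result and is correct. The three ingredients --- equality of PI degrees under Ore localization (common quotient division ring), the identification $Z(\mathbb{A}_{\mathbf{q}})=\bigoplus_{\lambda\in\Lambda_0}\mathbb{K}x^{\lambda}$ with $[\mathbb{Z}^n:\Lambda_0]=h$, and the passage to the quotient division ring $B$ with $\dim_{Z(B)}B=h$ --- are all sound, and your justification that $Z(B)=\cf(Z)$ by clearing denominators is exactly what is needed. One cosmetic remark: the reduction $\pideg(R)=\sqrt{\dim_{Z(Q)}Q}$ you invoke uses that $\mathbb{A}_{\mathbf{q}}$ is a \emph{domain} (so $Q=B$ is already a division ring, not just simple Artinian); you do say this, but it is worth flagging as the one place where the argument would fail for a general prime PI ring.
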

By above proposition (\ref{quan}), it follows in particular that the PI degree of a quantum torus of rank $2$ with commutation relation $xy=q^ryx$ is equals to $\frac{m}{\gcdi(r,m)}$.
In light of Remark(\ref{n2}), normal form of quantum torus reduces the algebra $\mathbb{A}_{\mathbf{q}}$ to a tensor product of quantum tori $\mathbb{A}_{q^{h_i}}$ of rank $2$ and a group algebra. Thus the $\pideg$ of $\mathbb{A}_{\mathbf{q}}$ can be simplified using relation (\ref{pid}) as follows:
\[\pideg(\mathbb{A}_{\mathbf{q}})=\prod\limits_{i=1}^s\pideg(\mathbb{A}_{q^{h_i}})=\prod\limits_{i=1}^s\frac{m}{\gcdi(h_i,m)}=\prod\limits_{i=1}^sk_i.\]
Now Kaplansky's Theorem (\cite{mr}, Theorem 13.3.8) has a striking consequence in case of a prime affine PI algebra over an algebraically closed field.
\begin{prop}[\cite{bg}, Theorem I.13.5]\label{sim}
Let $R$ be a prime affine PI algebra over an algebraically closed field $\mathbb{K}$ with $\pideg(R) =n$ and $V$ be a simple $R$-module. Then $V$ is a $\mathbb{K}$-vector space of dimension $t$, where $t \leq n$ and $R/\ann_R(V) \cong M_t(\mathbb{K})$.
\end{prop}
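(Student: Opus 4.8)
The plan is to reduce the statement to the Wedderburn--Artin structure of the primitive image $\bar R := R/\ann_R(V)$, and then use the hypothesis that $\mathbb{K}$ is algebraically closed to collapse the relevant division ring to $\mathbb{K}$ itself. Since $V$ is simple and faithful over $\bar R$, the ring $\bar R$ is primitive; being a homomorphic image of $R$ it satisfies the same polynomial identity, so I would invoke Kaplansky's Theorem (\cite{mr}, Theorem 13.3.8) to conclude that $\bar R$ is simple Artinian, say $\bar R \cong M_t(D)$ for a division ring $D$ finite-dimensional over its center. In particular $V$ is (up to isomorphism) the unique simple $\bar R$-module, and $\enmor_R(V) = \enmor_{\bar R}(V)$ is (anti-)isomorphic to $D$.

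Next I would pin down $D$. By Schur's Lemma $\Delta := \enmor_R(V)$ is a division ring, and by Proposition (\ref{finite}) the module $V$ is finite-dimensional over $\mathbb{K}$; hence $\Delta \subseteq \enmor_{\mathbb{K}}(V)$ is itself finite-dimensional over $\mathbb{K}$. A finite-dimensional division algebra over an algebraically closed field must equal that field, so $\Delta = \mathbb{K}$, and correspondingly $D = \mathbb{K}$. Writing $t := \dime_{\mathbb{K}} V$, the Jacobson Density Theorem then shows that the structure homomorphism $R \to \enmor_{\mathbb{K}}(V) \cong M_t(\mathbb{K})$ is surjective, and its kernel is by definition $\ann_R(V)$. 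This yields $R/\ann_R(V) \cong M_t(\mathbb{K})$ with $\dime_{\mathbb{K}} V = t$, which is the bulk of the claim.

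It remains to establish the bound $t \leq n$. Here I would pass to the ring of fractions: by Posner's Theorem the prime PI ring $R$ has a simple Artinian Goldie quotient ring $Q(R)$ which is central simple over $F := \cf(Z(R))$ of dimension $n^2$, where $n = \pideg(R)$. A central simple algebra of degree $n$ satisfies the standard identity of degree $2n$, so $R \hookrightarrow Q(R)$ satisfies it, and therefore so does its homomorphic image $M_t(\mathbb{K})$. By the Amitsur--Levitzki Theorem $M_t(\mathbb{K})$ satisfies no nonzero polynomial identity of degree smaller than $2t$; comparing degrees forces $2t \leq 2n$, i.e. $t \leq n$.

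The structural skeleton (Kaplansky, Schur, Jacobson density, Posner, Amitsur--Levitzki) is standard, so I do not expect difficulty there. The one place requiring genuine care is the identification $\Delta = \mathbb{K}$: it uses \emph{both} that $V$ is finite-dimensional over $\mathbb{K}$ --- which for affine PI algebras is precisely the Nullstellensatz-type input already recorded in Proposition (\ref{finite}) --- \emph{and} that $\mathbb{K}$ is algebraically closed. If either hypothesis were dropped, $D$ could be a nontrivial division algebra and the quotient would only be $M_t(D)$ rather than a matrix ring over $\mathbb{K}$; this is therefore the true engine of the proposition and the step I would present most carefully.
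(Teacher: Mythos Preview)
Your argument is correct and is essentially the standard proof of this fact. Note, however, that the paper does not give its own proof of Proposition~\ref{sim}: the result is quoted directly from \cite{bg}, Theorem~I.13.5, as an external input, so there is nothing in the paper to compare your argument against.

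One small point of care: you invoke Proposition~(\ref{finite}) to conclude that $V$ is finite-dimensional over $\mathbb{K}$, but as stated in this paper that proposition refers only to $\mathcal{O}_{\mathbf{q}}(\mathbb{K}^n)$ and $\mathbb{A}_{\mathbf{q}}$, not to an arbitrary prime affine PI algebra $R$. The general fact you actually need --- that every simple module over an affine PI $\mathbb{K}$-algebra is finite-dimensional --- is the full content of \cite{bg}, Proposition~III.1.1, and you rightly flag this as the step where algebraic closedness and the affine PI hypothesis do the real work; just cite the general form rather than the specialized Proposition~(\ref{finite}) when writing this up for a general $R$.
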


The above proposition yields the important link between the PI degree of a prime affine PI algebra over an algebraically closed field and its irreducible representations.
 Thus from proposition (\ref{quan}) and (\ref{sim}) along with the assumptions on multiparameters, it is quite clear that each simple $\mathcal{O}_{\mathbf{q}}(\mathbb{K}^n)$-module has dimension at most $\prod\limits_{i=1}^s k_i$, where $k_i$'s are as in (\ref{pid}).

\par The Artin-Procesi Theorem (\cite{mr}, Theorem $13.7.14$) provides a relationships between the Azumaya property and regular primes. In particular, if a prime algebra $R$ is an Azumaya algebra over its centre $C$, then each simple $R$-module has exactly dimension equal to $\pideg(R)$. With the aid of Artin-Procesi Theorem, it can be easily shown that $\mathcal{O}_{\mathbf{q}}(\mathbb{K}^n)$ is not Azumaya for any $n\geq 2$. On the other hand, quantum torus $\mathbb{A}_{\mathbf{q}}$ is an Azumaya algebra over its centre (cf. \cite{cp}, $7.2$). Since each simple $\mathcal{O}_{\mathbf{q}}(\mathbb{K}^n)$-module can be extended uniquely to a simple $\mathbb{A}_{\mathbf{q}}$-module (by Proposition (\ref{divisible})), therefore for any simple $\mathcal{O}_{\mathbf{q}}(\mathbb{K}^n)$-module $N$,
\[\dime_\mathbb{K}(N) = \pideg(\mathbb{A}_{\mathbf{q}}) = \pideg(\mathcal{O}_{\mathbf{q}}(\mathbb{K}^n))=\prod\limits_{i=1}^s k_i,\]
where $k_i$'s are as in (\ref{pid}).

\section{\textbf{Construction of a Simple Module}} 
In this section we wish to construct simple modules over the coordinate ring  $\mathcal{O}_{\mathbf{q}}(\mathbb{K}^n)$. By Proposition (\ref{divisible}), it suffices to do this for the rational quantum torus $\mathbb{A}_{
\mathbf{q}}$ of rank $n$. We follow the line of reasoning in \cite{mo} and our construction proceeds in the following steps.
\medskip\\
\textbf{Step~1:}~(The representation space)~ For $\underline{\alpha}=(\alpha_1,\cdots,\alpha_n)\in \left(\mathbb{K}^*\right)^n$, let us consider the $\mathbb{K}$-vector space $M(\underline{\alpha})$ with basis consisting of  \[\{e(a_1,\cdots,a_s)~:~0\leq a_i \leq k_i-1,~ 1\leq i\leq s\},\] where $k_i$ and $s$ are as defined in Introduction section.
\medskip\\
\textbf{Step~2:}~(Module structure)~ To define $\mathbb{A}_{\mathbf{q}}$-module action on $\mathbb{K}$-space $M(\underline{\alpha})$, we will use the normal form of rational quantum torus $\mathbb{A}_{\mathbf{q}}$. In the light of the  Remark (\ref{n2}) along with the Notation (\ref{not}) , let $X_1,\cdots,X_n$ be monomials for $\mathbb{A}_{\mathbf{q}}$ satisfying the relations (\ref{decom}). Let us define the $\mathbb{A}_{\mathbf{q}}$-module structure on the $\mathbb{K}$-space $M(\underline{\alpha})$ by the action of the variable $X_k$'s on the basis vectors as follows:
\begin{align*}
&e(a_1,\cdots,a_s)X_i=\alpha_i~e(a_1,\cdots,a_i+_i1,\cdots,a_s),\ \ \ \forall \ \ \ 1 \leq i \leq s\\
&e(a_1,\cdots,a_s)X_{i+s} =\alpha_i^{-1}\alpha_{i+s}\left(q^{h_i}\right)^{a_i-1}~e(a_1,\cdots,a_i+_i(-1),\cdots,a_s), \ \forall \ \ 1\leq i\leq s\\
&e(a_1,\cdots,a_s)X_{2s+j}=\alpha_{2s+j} ~e(a_1,\cdots,a_s),\ \ \ \forall \ \ \ 1\leq j \leq n-2s
\end{align*}
where $+_i$ is addition in the additive group ${\mathbb{Z}}/{k_i\mathbb{Z}}$.
\medskip\\
\textbf{Step~3:}~(Well-definedness)~ In order to establish the well-definedness of the above rules, we need to check the following relations for $1\leq i\leq s$ and $0\leq a_i\leq {k_i-1}$:
\begin{equation}\label{w1}
\begin{rcases}
  e(a_1,\cdots,a_s)X_iX_{i+s}=q^{h_i}~e(a_1,\cdots,a_s)X_{i+s}X_i&\\
  e(a_1,\cdots,a_s)X_iX_j=e(a_1,\cdots,a_s)X_jX_i, &\forall \  \ \ j \neq i+s\\
 e(a_1,\cdots,a_s)X_{i+s}X_j=e(a_1,\cdots,a_s)X_jX_{i+s},& \forall \ \ \ j \neq i\\
  e(a_1,\cdots,a_s)X_{k}X_{l}=e(a_1,\cdots,a_s)X_{l}X_{k}, &\forall \ \ \ 2s+1\leq k,l\leq n
  \end{rcases}
\end{equation}
The relations (\ref{w1}) can be easily verified.  With this we have the following.

\begin{theorem A}
The module $M(\underline{\alpha})$ is a simple $\mathbb{A}_{\mathbf{q}}$-module of dimension $\prod\limits_{i=1}^s k_i$.
\end{theorem A}
\begin{proof}
Let $P$ be a non-zero submodule of $M(\underline{\alpha})$. We claim that $P$ contains a basis vector of the form $e(a_1,\cdots,a_s)$. Indeed, any member $p\in P$ is a finite $\mathbb{K}$-linear combination of such vectors. i.e.,
\[
  p:=\sum_{\text{finite}} \lambda_k~e\left(a_1^{(k)},\cdots,a_s^{(k)}\right)  
\]
for some $\lambda_k\in \mathbb{K}$. Suppose there exist two non-zero coefficients, say, $\lambda_u,\lambda_v$.
We can choose the smallest index $r$ such that $a_r^{(u)}\neq a_r^{(v)}$ in ${\mathbb{Z}}/{k_r\mathbb{Z}}$. Now the vectors $e\left(a_1^{(u)},\cdots,a_s^{(u)}\right)$ and $e\left(a_1^{(v)},\cdots,a_s^{(v)}\right)$ are eigenvectors of $X_rX_{r+s}$ associated with the eigenvalues $\alpha_{r+s}\left(q^{h_r}\right)^{a_r^{(u)}}=\mu_u$ (say) and $\alpha_{r+s}\left(q^{h_r}\right)^{a_r^{(v)}}=\mu_v$ (say) respectively. We claim that $\mu_u\neq \mu_v$. Indeed, \[\mu_u=\mu_v \implies q^{h_r\left(a_r^{(u)}-a_r^{(v)}\right)}=1 \implies k_r\mid \left(a_r^{(u)}-a_r^{(v)}\right),\] which is a contradiction.
\par Now $pX_rX_{r+s}-\mu_up$ is a non zero element in $P$ of smaller length than $p$. Hence by induction it follows that every non zero submodule of $M(\underline{\alpha})$ contains a basis vector of the form $e(a_1,\cdots ,a_s)$. Thus $M(\underline{\alpha})$ is simple by the actions of $X_i,~~1\leq i \leq s$.
\end{proof}
\section{\textbf{Main Results}}
\begin{theorem B} \label{isomorphism}
Let $N$ be a simple $\mathcal{O}_{\mathbf{q}}(\mathbb{K}^n)$-module. Then $N$ is isomorphic to $M(\underline{\alpha})$ as $\mathbb{A}_{\mathbf{q}}$-module for some $\underline{\alpha} \in \left(\mathbb{K}^*\right)^n$. 
\end{theorem B}
\begin{proof}
A given simple $\mathcal{O}_{\mathbf{q}}(\mathbb{K}^n)$-module structure on $N$ can be extended uniquely to a simple $\mathbb{A}_{\mathbf{q}}$-module structure on $N$ by Proposition (\ref{divisible}). In the light of Remark (\ref{n2}) and Notation (\ref{not}) , let $X_1,\cdots,X_n$ be monomials for $\mathbb{A}_{\mathbf{q}}$ satisfying the relations (\ref{decom}). Since each of the monomials 
\begin{equation}\label{cev}
   X_i^{k_i},\ X_iX_{i+s},\  X_{2s+1},\ X_{2s+2},\cdots,X_n,\ \ \ \forall\ \ \ 1\leq i\leq s 
\end{equation}
commutes and $N$ is a finite dimensional vector space over $\mathbb{K}$ (by Proposition (\ref{finite})), there is a common eigenvector $v$ of monomials (\ref{cev}).
Put
\[
\begin{array}{ll}
vX_i^{k_i}=\xi_iv, &\forall \ \ \ 1\leq i\leq s\\
vX_iX_{i+s}=\alpha_{i+s}v,&\forall \ \ \ 1\leq i\leq s\\
vX_{2s+j}=\alpha_{2s+j}v, &\forall \ \ \ 1\leq j\leq n-2s
\end{array}
\]
Let $\alpha_i$ be a $k_i$-th root of $\xi_i$ for $1\leq i \leq s$. Since each of the monomials of $\mathbb{A}_{\mathbf{q}}$ acts on $N$ non trivially, therefore all the $\alpha_k$'s are nonzero and so $\underline{\alpha}\in \left(\mathbb{K}^*\right)^n$.
\par Any $\mathbb{A}_{\mathbf{q}}$-module homomorphism must map $X_iX_{i+s}$-eigenvectors of $M(\underline{\alpha})$ to $X_iX_{i+s}$-eigenvectors of $N$
with the same eigenvalue. Define a linear transformation \[\phi:M(\underline{\alpha}) \longrightarrow N\]
by specifying the image of basis vectors of $M(\underline{\alpha})$ only:
\begin{equation}\label{linear}
\phi\left(e(a_1,\cdots,a_s)\right):=\prod \limits_{i=1}^s \alpha_i^{-a_i}~vX_s^{a_s}\cdots X_1^{a_1}.
\end{equation}
To prove $\phi$ is an $\mathbb{A}_{\mathbf{q}}$-module homomorphism, it suffices to check that 
\begin{eqnarray}
 &\phi(e(a_1,\cdots,a_s)X_i)=\phi(e(a_1,\cdots,a_s))X_i,\ \ \ \ \ \ \ \ \ \ \ \ &\forall \ \ \ 1\leq i\leq s \label{h1}\\ 
 &\phi(e(a_1,\cdots,a_s)X_{i+s})=\phi(e(a_1,\cdots,a_s))X_{i+s},\ \ \ \ \ \ &\forall \ \ \ 1\leq i\leq s \label{h2}\\ 
 &\phi(e(a_1,\cdots,a_s)X_{2s+j})=\phi(e(a_1,\cdots,a_s))X_{2s+j},\ \ &\forall \ \ \ 1\leq j\leq n-2s \label{h3}
\end{eqnarray} 
The relations (\ref{h1})-(\ref{h3}) can be easily verified using the linear map $\phi$ in (\ref{linear}) and $\mathbb{A}_{\mathbf{q}}$-module $M(\underline{\alpha})$. To verify the only relation (\ref{h2}), the following computation will be very useful : for $1\leq i \leq s$ and $0\leq a_i\leq k_i-1$,
\[
vX_s^{a_s}\cdots X_1^{a_1}X_{i+s}=
\begin{cases}
\alpha_{i+s}\left(q^{h_i}\right)^{a_i-1}~vX_s^{a_s}\cdots X_i^{a_i-1}\cdots  X_1^{a_1},&\text{when} \ a_i > \ 0.\\
\xi_i^{-1}\alpha_{i+s}\left(q^{h_i}\right)^{{k_i}-1}~vX_s^{a_s}\cdots X_i^{{k_i}-1}\cdots  X_1^{a_1},&\text{when}  \ a_i = \ 0.
\end{cases}
\]
\par Hence $\phi$ is a non zero $\mathbb{A}_{\mathbf{q}}$-module homomorphism. Since $M(\underline{\alpha})$ and $N$ are both simple $\mathbb{A}_{\mathbf{q}}$-module, by Schur's lemma $\phi$ is an isomorphism.
\end{proof}
The above theorem provides an opportunity for classification of simple $\mathcal{O}_{\mathbf{q}}(\mathbb{K}^n)$-modules in terms of parameters.
\begin{coro} \label{surjective}
There is a surjective map $\Psi$ from $(\mathbb{K}^{*})^n$ onto the set of all simple $\mathcal{O}_{\mathbf{q}}(\mathbb{K}^n)$-modules such that $\dime_\mathbb{K}\Psi\left(\underline{\alpha}\right)=\prod \limits_{i=1}^s k_i~,~~\forall~ \underline{\alpha} \in (\mathbb{K}^{*})^n$.
 \end{coro}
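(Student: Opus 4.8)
The goal is to establish Corollary \ref{surjective}: the existence of a surjective map $\Psi\colon (\mathbb{K}^*)^n \to \{\text{simple } \mathcal{O}_{\mathbf{q}}(\mathbb{K}^n)\text{-modules}\}/\!\cong$ with $\dim_{\mathbb{K}}\Psi(\underline\alpha)=\prod_{i=1}^s k_i$ for every $\underline\alpha$. The plan is to assemble this directly from the two theorems just proved together with Proposition \ref{divisible}.

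First I would define $\Psi$. Given $\underline\alpha \in (\mathbb{K}^*)^n$, Theorem A produces the $\mathbb{A}_{\mathbf{q}}$-module $M(\underline\alpha)$, which is simple of dimension $\prod_{i=1}^s k_i$. Restricting the action along the inclusion $\mathcal{O}_{\mathbf{q}}(\mathbb{K}^n)\hookrightarrow \mathbb{A}_{\mathbf{q}}$ turns $M(\underline\alpha)$ into an $\mathcal{O}_{\mathbf{q}}(\mathbb{K}^n)$-module; I would note it is still simple as an $\mathcal{O}_{\mathbf{q}}(\mathbb{K}^n)$-module, since a submodule over the smaller ring on which, additionally, each $X_i$ acts invertibly (the actions in Theorem A are by invertible operators) is automatically an $\mathbb{A}_{\mathbf{q}}$-submodule, hence $0$ or everything; alternatively one invokes the uniqueness/compatibility in Proposition \ref{divisible}. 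Set $\Psi(\underline\alpha)$ to be the isomorphism class of this $\mathcal{O}_{\mathbf{q}}(\mathbb{K}^n)$-module. The dimension count is immediate from Theorem A, so the displayed dimension formula holds by construction.

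Next I would prove surjectivity. Let $N$ be any simple $\mathcal{O}_{\mathbf{q}}(\mathbb{K}^n)$-module. As remarked in the introduction, we may assume each variable $x_j$ acts nontrivially on $N$ (otherwise $N$ descends to a simple module over a quantum affine space in fewer variables, and one argues by induction on $n$ — I would spell this reduction out, or simply fold it into the statement by allowing the construction on the smaller algebra; to keep things clean I would treat the faithful-action case and mention the reduction). Under this assumption Proposition \ref{divisible} gives $N$ a unique compatible simple $\mathbb{A}_{\mathbf{q}}$-module structure, and Theorem B then yields $\underline\alpha \in (\mathbb{K}^*)^n$ with $N \cong M(\underline\alpha)$ as $\mathbb{A}_{\mathbf{q}}$-modules. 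Since the $\mathbb{A}_{\mathbf{q}}$-structures restrict compatibly to the $\mathcal{O}_{\mathbf{q}}(\mathbb{K}^n)$-structures on both sides, this is also an isomorphism of $\mathcal{O}_{\mathbf{q}}(\mathbb{K}^n)$-modules, so $\Psi(\underline\alpha)=[N]$. Hence $\Psi$ is surjective.

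The only real subtlety — the "main obstacle," such as it is — is bookkeeping around the nontriviality hypothesis and the compatibility between the $\mathcal{O}_{\mathbf{q}}(\mathbb{K}^n)$- and $\mathbb{A}_{\mathbf{q}}$-module structures: one must be careful that the simple module produced by Theorem A, a priori an $\mathbb{A}_{\mathbf{q}}$-module, is genuinely simple after restriction to $\mathcal{O}_{\mathbf{q}}(\mathbb{K}^n)$, and that the isomorphism in Theorem B is an isomorphism of $\mathcal{O}_{\mathbf{q}}(\mathbb{K}^n)$-modules and not merely of $\mathbb{A}_{\mathbf{q}}$-modules. Both points follow from Proposition \ref{divisible} (uniqueness forces any $\mathcal{O}_{\mathbf{q}}(\mathbb{K}^n)$-isomorphism to respect the extended structures, and conversely). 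Everything else is formal, so the proof is short: essentially a two-line assembly of Theorem A, Theorem B, and Proposition \ref{divisible}, plus the observation on dimensions. Note that $\Psi$ is far from injective — distinct parameter tuples routinely give isomorphic modules (e.g. scaling within a $k_i$-orbit) — which is why only surjectivity is asserted.
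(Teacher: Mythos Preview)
Your proposal is correct and matches the paper's approach: the corollary is stated there as an immediate consequence of Theorem~A and Theorem~B (together with Proposition~\ref{divisible} and the standing convention from the Introduction that all $x_i$ act nontrivially), with no further argument given. Your extra care about simplicity under restriction and compatibility of the two module structures is a welcome elaboration of points the paper leaves implicit.
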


\par Let $\underline{\alpha}$, $\underline{\beta}$ be two elements in $\left(\mathbb{K}^*\right)^n$ such that $M(\underline{\alpha})$ and $M(\underline{\beta})$ are isomorphic as $\mathbb{A}_{\mathbf{q}}$-module. As in the space $M(\underline{\alpha})$,
\[e(a_1,\cdots,a_s)=\prod \limits_{i=1}^s\alpha_i^{-a_i}~e(0,\cdots,0)X^{a_1}_1\cdots X^{a_s}_s,\]
the $\mathbb{A}_{\mathbf{q}}$-module isomorphism $\psi:M(\underline{\alpha}) \rightarrow M(\underline{\beta})$ can be uniquely determined by the only image of $e(0,\cdots,0)$, i.e., say
\begin{equation}\label{sum}
\psi\left(e(0,\cdots,0)\right)= \sum_{\text{finite}}\lambda_t~e\left(b^{(t)}_1,\cdots,b^{(t)}_s\right),    
\end{equation}
 for $\lambda_t \in \mathbb{K}^*$. Suppose there exist two non-zero coefficients, say, $\lambda_u,\lambda_v$.
We can choose the smallest index $r$ such that $b_r^{(u)}\neq b_r^{(v)}$ in ${\mathbb{Z}}/{k_r\mathbb{Z}}$. 
Now from the equality \[\psi\left(e(0,\cdots,0)X_rX_{r+s}\right)=\psi\left(e(0,\cdots,0)\right)X_rX_{r+s}\] we must have:
\[
\alpha_{r+s}=\beta_{r+s}\left(q^{h_r}\right)^{b^{(u)}_r}=\beta_{r+s}\left(q^{h_r}\right)^{b^{(v)}_r}\implies k_r\mid (b^{(u)}_r-b^{(v)}_r),
\]
which is a contradiction. Hence it follows that the image in  (\ref{sum}) is of the form \[\psi\left(e(0,\cdots,0)\right)= \lambda~e\left(r_1,\cdots,r_s\right),\] for some $\lambda \in \mathbb{K}^*$ and $r_i\in {\mathbb{Z}}/{k_i\mathbb{Z}}$. Also using this isomorphism $\psi$, we obtain a relation between $\underline{\alpha}$ and $\underline{\beta}$ as mentioned below in the equation (\ref{iso}).

\begin{theo}\label{inj}
Let $\underline{\alpha},\underline{\beta}\in (\mathbb{K}^*)^n$ such that 
\begin{equation}\label{iso}
  \begin{rcases}
 \alpha^{k_i}_i=\beta^{k_i}_i, & \forall\ \ \  1\leq i\leq s\\
\alpha_{i+s}=\beta_{i+s}\left(q^{h_i}\right)^{r_i}~,\ r_i\in {\mathbb{Z}}/{k_i\mathbb{Z}}, & \forall\ \ \  1\leq i\leq s\\
\alpha_{2s+j}=\beta_{2s+j}, & \forall \ \ \ 1\leq j\leq {n-2s}
  \end{rcases}
\end{equation}
Then there is an $\mathbb{A}_{\mathbf{q}}$-module isomorphism $\phi:M(\underline{\alpha})\rightarrow M(\underline{\beta})$ given by 
\[\phi \left(e(a_1,\cdots,a_s)\right)=\prod\limits_{i=1}^{s}\left(\alpha^{-1}_i\beta_i\right)^{a_i}e\left(a_1+r_1,\cdots,a_s+r_s\right).\]
\end{theo}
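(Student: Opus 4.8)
The statement to prove is Theorem~\ref{inj}: given the three families of constraints in (\ref{iso}) relating $\underline{\alpha}$ and $\underline{\beta}$, the explicit linear map
\[
\phi\bigl(e(a_1,\cdots,a_s)\bigr)=\prod_{i=1}^{s}\bigl(\alpha^{-1}_i\beta_i\bigr)^{a_i}\,e\bigl(a_1+r_1,\cdots,a_s+r_s\bigr)
\]
is an $\mathbb{A}_{\mathbf{q}}$-module isomorphism $M(\underline{\alpha})\to M(\underline{\beta})$. The approach is entirely computational: $\phi$ is visibly a bijection on basis vectors (it permutes the index set $\prod_i \mathbb{Z}/k_i\mathbb{Z}$ by the translation $a_i\mapsto a_i+r_i$ and rescales each by a nonzero scalar), so the only real content is checking that $\phi$ intertwines the actions of the generators $X_1,\dots,X_n$. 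Thus I would reduce the whole theorem to verifying the three identities
\[
\phi\bigl(e(\underline a)X_i\bigr)=\phi\bigl(e(\underline a)\bigr)X_i,\quad
\phi\bigl(e(\underline a)X_{i+s}\bigr)=\phi\bigl(e(\underline a)\bigr)X_{i+s},\quad
\phi\bigl(e(\underline a)X_{2s+j}\bigr)=\phi\bigl(e(\underline a)\bigr)X_{2s+j}
\]
for all $1\le i\le s$, $1\le j\le n-2s$, using the module actions defined in Theorem~A for both $M(\underline\alpha)$ and $M(\underline\beta)$.

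First I would dispose of the easy cases. For the central generators $X_{2s+j}$, both sides scale $\phi(e(\underline a))$ by $\alpha_{2s+j}=\beta_{2s+j}$ (third line of (\ref{iso})), so the identity is immediate. For $X_i$ with $1\le i\le s$: the left-hand side is $\alpha_i$ times $\phi(e(\ldots,a_i+_i1,\ldots))$, which contributes an extra factor $(\alpha_i^{-1}\beta_i)$ relative to $\phi(e(\underline a))$ and shifts the $i$-th slot to $a_i+_i1+r_i$; the right-hand side is $\beta_i$ times $e(\ldots,(a_i+r_i)+_i1,\ldots)$ scaled by the same product. Matching scalars reduces to $\alpha_i\cdot(\alpha_i^{-1}\beta_i)=\beta_i$, which holds identically — so here we don't even need the first constraint $\alpha_i^{k_i}=\beta_i^{k_i}$ yet.

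The main obstacle is the $X_{i+s}$ relation, and this is where both remaining hypotheses in (\ref{iso}) get used and where one must be careful about the modular arithmetic in $\mathbb{Z}/k_i\mathbb{Z}$ versus the integer exponent $a_i-1$ appearing in the formula $e(\underline a)X_{i+s}=\alpha_i^{-1}\alpha_{i+s}(q^{h_i})^{a_i-1}e(\ldots,a_i+_i(-1),\ldots)$. Applying $\phi$ to the left side and $X_{i+s}$ (for the $M(\underline\beta)$-structure) to the right side, both produce $e(\ldots,a_i+r_i-1,\ldots)$ up to scalars; comparing the scalars, one needs
\[
\alpha_i^{-1}\alpha_{i+s}\bigl(q^{h_i}\bigr)^{a_i-1}\cdot(\alpha_i^{-1}\beta_i)^{a_i-1}
\;=\;(\alpha_i^{-1}\beta_i)^{a_i}\cdot\beta_i^{-1}\beta_{i+s}\bigl(q^{h_i}\bigr)^{(a_i+r_i)-1}.
\]
Substituting $\alpha_{i+s}=\beta_{i+s}(q^{h_i})^{r_i}$ from the second constraint collapses the $q^{h_i}$-powers and the $\beta_i$-powers, leaving an identity in $\alpha_i,\beta_i$ alone. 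The one genuine subtlety is the wrap-around case $a_i+r_i\equiv 0\bmod k_i$ (equivalently $a_i=0$ crossing to $k_i-1$, or the shift landing on a representative outside $[0,k_i-1]$): here the exponent $a_i-1$ in the defining formula is $-1$ while the representative used is $k_i-1$, and on the $M(\underline\beta)$ side the analogous discrepancy produces an extra factor $(q^{h_i})^{k_i}=1$ — harmless by definition of $k_i$ — together with a factor $\alpha_i^{k_i}/\beta_i^{k_i}$ (or its inverse) coming from the mismatch between $(\alpha_i^{-1}\beta_i)^{a_i+r_i}$ computed with the true sum versus the reduced representative. Precisely this factor is killed by the first constraint $\alpha_i^{k_i}=\beta_i^{k_i}$. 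So I would present the $X_{i+s}$ check by splitting into the two cases $0\le a_i+r_i\le k_i-1$ (no wrap) and $a_i+r_i\ge k_i$ (one wrap), noting that in the first case the computation is a bare cancellation and in the second the surplus $(\alpha_i^{-1}\beta_i)^{k_i}(q^{h_i})^{k_i}$ equals $1$ by (\ref{iso})'s first line and $\ord(q^{h_i})=k_i$. Once all three intertwining identities are verified, $\phi$ is a nonzero $\mathbb{A}_{\mathbf{q}}$-module homomorphism between modules of equal finite dimension $\prod_i k_i$ that is bijective on a basis, hence an isomorphism, completing the proof.
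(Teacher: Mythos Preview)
The paper states Theorem~\ref{inj} without proof, leaving the routine verification to the reader. Your computational approach---checking that $\phi$ is a bijection on basis vectors and that it intertwines the action of each generator $X_1,\ldots,X_n$---is exactly the expected one and is essentially correct.

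There is, however, one oversight. You claim that the intertwining check for $X_i$ ($1\le i\le s$) holds identically and does not require the hypothesis $\alpha_i^{k_i}=\beta_i^{k_i}$, but this is false in the wrap-around case $a_i=k_i-1$. There $a_i+_i1=0$, so the relevant factor appearing in $\phi\bigl(e(\ldots,a_i+_i1,\ldots)\bigr)$ is $(\alpha_i^{-1}\beta_i)^{0}=1$, not $(\alpha_i^{-1}\beta_i)^{a_i+1}$; the scalar comparison then reads $\alpha_i=\beta_i(\alpha_i^{-1}\beta_i)^{k_i-1}$, which is exactly the condition $\alpha_i^{k_i}=\beta_i^{k_i}$. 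So the first line of (\ref{iso}) is already needed for $X_i$, not only for $X_{i+s}$. The cleanest remedy is to observe at the outset that $\alpha_i^{k_i}=\beta_i^{k_i}$ gives $(\alpha_i^{-1}\beta_i)^{k_i}=1$, so $(\alpha_i^{-1}\beta_i)^{x}$ depends only on $x\bmod k_i$, and likewise $(q^{h_i})^{k_i}=1$ makes $(q^{h_i})^x$ depend only on $x\bmod k_i$; once this is recorded, every wrap-around case for both $X_i$ and $X_{i+s}$ is absorbed automatically and your generic scalar comparison goes through uniformly without any case split.
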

Thus the above discussion defines an equivalence relation on ${(\mathbb{K}^*)}^n$ so that each equivalence class uniquely determines a simple $\mathcal{O}_{\mathbf{q}}(\mathbb{K}^n)$-module up to isomorphism. 

\section{Conclusion}
From representation theoretic perspective, we have completed a fundamental goal for the multiparameter quantum affine space $\mathcal{O}_{\mathbf{q}}(\mathbb{K}^n)$ by finding and classifying irreducible representations. Hence it follows that all the primitive ideals of the algebra $\mathcal{O}_{\mathbf{q}}(\mathbb{K}^n)$ are annihilators of $~\Psi\left(\underline{\alpha}\right)$ for $\underline{\alpha} \in (\mathbb{K^{*})}^n$. Another way of recognizing primitive ideals of the algebra $\mathcal{O}_{\mathbf{q}}(\mathbb{K}^n)$ in the root of unity case, without first finding irreducible representations, have been studied by K. R. Goodearl and E. S. Letzter  in \cite{glt}.  
\section*{Acknowledgements}
We are very grateful to Dr. Ashish Gupta for setting up the problem, going through the paper and supervising it. We would also like to thank the National Board for Higher Mathematics, Department of Atomic Energy, India for funding our research.
\medskip

\end{document}